%


\documentclass[12pt,draftcls,onecolumn]{IEEEtran}
\usepackage{fullpage}
\usepackage{latexsym,url}
\usepackage{amsmath,amssymb,array,graphicx,verbatim}
\usepackage{setspace}
\usepackage
  [breaklinks,bookmarks,bookmarksnumbered,bookmarksopen,bookmarksopenlevel=2]
  {hyperref}


\newcommand{\eg}{{\it e.g.}}

\newtheorem{theorem}{Theorem}
\newtheorem{lemma}[theorem]{Lemma}

\begin{document}

\title{\LARGE{Power Allocation and Spectrum Sharing in Multi-User, Multi-Channel
Systems with Strategic Users}}
\author{\Large{Ali~Kakhbod and Demosthenis~Teneketzis} \\
\large{Department of Electrical Engineering and Computer Science,} \\
\large{University of Michigan, Ann Arbor, MI, USA.} \\
\normalsize{Email: {\tt{\{akakhbod,teneket\}@umich.edu}}}} 



\maketitle
\begin{abstract}
We consider the decentralized power allocation and spectrum sharing problem in multi-user, multi-channel systems
with strategic users. We present a mechanism/game form that has the following desirable
features. (1) It is individually rational. (2) It is  budget balanced at every Nash equilibrium of the game induced by the game form as well as off equilibrium.
(3) The allocation corresponding to  every Nash equilibrium (NE) of the game induced by the mechanism is a Lindahl allocation, that is, a weakly Pareto optimal allocation.  Our proposed game form/mechanism achieves all the above desirable properties without any
assumption about, concavity, differentiability, monotonicity, or quasi-linearity of the users' utility functions. 
\end{abstract}
\begin{section}{Introduction}
\begin{subsection}{Motivation and Challenges}
As wireless communication devices become more pervasive,
the demand for the frequency spectrum that serves as
the underlying medium grows.
Recently, the Federal Communications Commission (FCC) has established rules (see \cite{fcc}) that describes how cognitive radios can lead to more efficient use of the frequency  spectrum. These rules along with the cognitive radio's features and the fact that information in the wireless network is decentralized  and users may be strategic give rise to a wealth of important and challenging  research issues  associated with power allocation and spectrum sharing. These issues  have recently attracted a lot of interest (\eg \; \cite{tse}, \cite{berry}, \cite{liu} and \cite{cioffi}; a more detailed discussion of the references and their comparison with the results of this paper will be presented in section \ref{dis}). 

In this paper we investigate a power allocation and spectrum sharing problem arising in multi-user, multi-channel systems with decentralized information and strategic/selfish users. We formulate the problem as a public good  (\cite{tian} Ch. 12) allocation with strategic users. We propose an approach based on the philosophy of mechanism design, in particular, implementation theory (\cite{tian} Ch. 15). We present a game form /mechanism (\cite{tian} Ch. 15.2.3) and analyze its properties.  We compare our results with those already available in the literature.  
\end{subsection}

\begin{subsection}{Organization of the paper}
The rest of the paper is organized as follows.  In section \ref{00} we present our model, describe the assumptions  on the model's information structure  and state our objective. In section \ref{11} we describe the allocation game form/mechanism we propose for the solution of our problem. In section \ref{2222} we interpret the components of the proposed game form/mechanism.  In section \ref{33} we investigate the properties of the proposed game form. In section \ref{dis} we  compare the results of this paper with those of the existing literature. We conclude in section \ref{con}. 

\end{subsection}
\end{section}
\begin{section}{The Model and Objective}
\label{00}
\begin{subsection}{The Model}
We consider $N$ users/agents communicating over $f$ frequency bands. Let $\textbf{N}:=\{1,\cdots,N\}$ be the set of users, and $\textbf{F}:=\{1,2,\cdots,f\}$  the set of frequency bands. Each user $i, i\in \textbf{N},$ is a communicating pair consisting of one transmitter and one receiver. There is one additional agent, the  $(N+1)^{th}$ agent, who is different from all the other $N$ agents/users and whose role will be described below. Each user has a fixed total power $\bar{W}$ which he can allocate over the set $\textbf{F}$ of frequency bands. Let $p_i^j$, $i \in \textbf{N}, j \in \textbf{F}$ denote  the power user $i$  allocates to frequency band $j$. The power $p_i^j, i \in \textbf{N}, j \in \textbf{F}$ must be chosen from the set $\textbf{Q}:=\{0,Q_1,Q_2,\cdots,Q_l\}$ where $Q_k>0, 1 \leq k \leq l$ and $0$ means that user $i$ does not use frequency band $j \in \textbf{F}$ to communicate information. In other words,  \textbf{Q} is a set of quantization levels that a user can use when he allocates power in a certain frequency band. Let $\bar{p}_i=(p_i^1,p_i^2,\cdots,p_i^f), i\in \textbf{N}$, denote a feasible bundle of power user $i$ allocates over the frequency bands in $\textbf{F}$. That is, $p_i^j\in \textbf{Q}, \ \forall j\in \textbf{F},$ and $\sum_{j\in\textbf{F}}p_i^j\leq \bar{W}$. Let $\texttt{P}=(\bar{p}_1,\bar{p}_2,\cdots,\bar{p}_N)$ be a profile of feasible bundles of powers allocated by the $N$ users over the frequency bands in $\textbf{F}$; let  $\Pi$  denote the set of all feasible profiles $\texttt{P}$. Since the sets, $\textbf{N},\textbf{F}$ and $\textbf{Q}$ are finite, $\Pi$ is finite. Let $|\Pi|=G_N$; we represent  every feasible power profile by a number between $1$ and $G_N$.  Thus, $\Pi=\{1,2,\cdots,{G_N}\}$. If user $i$ allocates positive power in frequency band $j$, it may experience interference from those users who also allocate positive power in that frequency band. The intensity of the interference experienced by user $i, i\in \textbf{N},$ depends on the power profiles used by the other users and the `channel gains' $h_{ji}$ between the other users $j, j\neq i,$ and $i$. The satisfaction that user $i, i\in \textbf{N},$ obtains during the communication process depends on his transmission power and the intensity of the interference he experiences. Consequently, user $i$'s, $i \in \textbf{N}$, satisfaction depends on the whole feasible bundle $k, k\in \Pi,$ of power and is described by his utility function $V_i(k,t_i)$, $i \in \textbf{N}$, where $t_i \in \mathbb{R}$ represents the tax (subsidy) user $i$ pays (receives) for communicating. One  example of such a utility function is presented  in the discussion following the assumptions. All taxes are paid to the $(N+1)^{th}$ agent who is not a profit maker; this agent acts like an accountant, collects the money from all users who pay taxes and redistributes it to all users who receive subsidies. \\
  We now state our assumptions about the model, the users' utility functions, and the nature of the problem we investigate. Some of these assumptions are restrictions we impose, some others are a consequence of the nature of the problem we investigate. We comment on each of the assumptions we make after we state them.\\
\textbf{(A1)}: We consider a static power allocation and spectrum sharing problem.\\
\textbf{(A2)}: Each agent/user is aware of all the other users present in the system. Users talk to each other and exchange messages in a broadcast setting. That is, each user hears every other user's message; the $(N+1)^{th}$ agent hears all the other users' messages. After the message exchange process ends/converges, decision about power allocations at various frequency bands are made.\\ 
\textbf{(A3)}: Each user's transmission at a particular frequency band creates interference to every user transmitting in the same frequency band. \\
\textbf{(A4)}: The channel gains $h_{ji}(\hat{f}), j,i \in \textbf{N},  \hat{f}\in\textbf{F}$ are known to user $i, i\in \textbf{N}$. The gains $h_{ji}(\hat{f})$, $j, i \in \textbf{N}, \ \hat{f}\in\textbf{F}$, do not change during the communication process.\\     
\textbf{(A5)}: Each user's utility $V_i(x,t_i), x \in \Pi \cup \{0\}$\footnote{ The number zero denotes every non-feasible allocation.}, is decreasing in $t_i, t_i \in \mathbb{R}$, $i \in \textbf{N}$. Furthermore,  $V_i(x,t_i)\geq V_i(0,t_i)$ for any $t_i \in \mathbb{R}$ and $x \in \Pi$. \\
\textbf{(A6)}: The utility function $V_i, i\in \textbf{N},$ is user $i$'s private information.\\
\textbf{(A7)}: The quantization set  $\textbf{Q}$ is selected from $\mathcal{Q}$; the parameter $\bar{W}$ is selected from $\mathcal{W}$, and $V_i$ is selected from $\mathcal{V}$ for all $i, i \in \textbf{N}$.  $\textbf{Q}, \mathcal{Q}, \bar{W}, \mathcal{W}$ and $\mathcal{V}$ are common knowledge among all users.\\
\textbf{(A8)}: Each user behaves strategically, that is, each user is selfish and attempts to maximize his own utility function under the constraints on the total power available to him, and the set $\textbf{Q}$ of quantization levels.\\
\textbf{(A9)}: The representation/association of every feasible power profile by a number in the set $\Pi$ is common knowledge among all users. \\ 
We now briefly discuss each of the above assumptions. 
We restrict attention to the static power allocation and spectrum sharing problem (\textbf{(A1)}). The dynamic problem is a major open problem that we intend to address in the future.  We assume that all users are in a relatively small area,  so they can hear each other, are aware of the presence of one another, interfere with one another and exchange messages in a broadcast setting (\textbf{(A2)},\textbf{(A3)}). 
Since each user's satisfaction depends on his transmission power and the interference he experiences, his utility will depend on the whole power profile $x\in \Pi$; furthermore, the higher the tax a user pays, the lower is his satisfaction; moreover any feasible power allocation $x$, (i.e. $x \in \Pi$) is preferred to any  non-feasible power allocation denoted by $0$. All these  considerations justify (\textbf{(A5)}). An example of $V_i(x,t_i)$ is
\begin{eqnarray}
U_i\left(\frac{h_{ii}(1){p}_i^1}{\frac{\mathcal{N}_0}{2}+\sum_{j, j\neq i}h_{ji}(1){p}_j^1},\cdots,\frac{h_{ii}(f){p}_i^f}{\frac{\mathcal{N}_0}{2}+\sum_{j, j\neq i}h_{ji}(f){p}_j^f}\right)-t_i, 
\end{eqnarray}
where $\frac{h_{ii}(k){p}_i^k}{\frac{\mathcal{N}_0}{2}+\sum_{j, j\neq i}h_{ji}(k){p}_j^k}$ is the Signal to Interference Ratio (SIR) in frequency band $k$.
This example illustrates the following: (1) A user's utility function may explicitly depend on the channel gains $h_{ji}, j, i \in \textbf{N};$ (2) User $i, i\in \textbf{N},$ must know $h_{ji}, j \in \textbf{N}$, so that he can be able to evaluate the impact of any feasible power profile $x \in \Pi$ that he proposes on his own utility. Thus, we assume that the channel gains $h_{ji}$, $j\in \textbf{N},$ are known to user $i$, and this is true for every user $i$ (\textbf{(A4)}). These channel gains have to be measured before the communication process starts. In the situation where users are cooperative $h_{ji}$ can easily be determined; user $j$ sends a pilot signal of a fixed power to user $i$, user $i$ measures the received power and determines $h_{ji}$. When users are strategic/selfish, the measurement of $h_{ji}$ can not be achieved according to the process described above, because user $j$ may have an incentive to use a pilot signal other than the one agreed beforehand so that he can obtain an advantage over user $i$. In this situation procedures similar to ones described in \cite{tse} (section V) can be used to measure $h_{ji}$; we present a method, different from those proposed in \cite{tse}, for measuring $h_{ji}(\hat{f}),$ $j,i \in \textbf{N}, \hat{f}\in \textbf{F},$  after we discuss all the assumptions. In (\textbf{(A4)}) we further assume that $h_{ji}$ do not change during the communication process. Such an assumption is reasonable when the mobile users move slowly and  the variation of the channel is considerably slower than the duration of the communication process. Assumption (\textbf{(A8)}) is a behavioral one not a restriction on the model. Since according to (\textbf{(A8)}) users are strategic, each user may not want to reveal his own preference over the set of feasible power allocations, thus assumption (\textbf{(A6)}) is reasonable. It is also reasonable to assume that the function space where each user's utility comes from is the same for all users and common knowledge among all users (\textbf{(A7)}). The fact that a user's utility is his private information along with assumption (\textbf{(A8)}) have an immediate impact on the solution/equilibrium concepts that can be used in the game induced by any mechanism. We will address this issue when we define the objective of our problem. Assumption (\textbf{(A7)}) also ensures that each user uses the same quantization set. Furthermore,  it states that each user knows the power available to every other user. The solution methodology presented in this paper works also for the case where every user knows his total available power, has an upper bound on the power available to all other users, and this upper bound is common knowledge among all users. Assumption (\textbf{(A9)}) is necessary for the game form/mechanism proposed in this paper; it ensures that each user interprets consistently the messages he receives from all other users. \\
In addition to the method described in \cite{tse}, another method for determining the gains $h_{ji}(\hat{f})$, $j,i \in \textbf{N}, \hat{f}\in\textbf{F}$ is the following. We assume that the gain $h_{ij}(\hat{f})$ from the transmitter of pair $i$ to the receiver of pair $j$ is the same as $\bar{h}_{ji}(\hat{f})$, the gain from the receiver of pair $j$ to the transmitter of pair $i$ for all $i, j \in \textbf{N}$ and  $\hat{f}\in \textbf{F}$. Before the power allocation and spectrum sharing process starts, the $(N+1)^{th}$ agent asks transmitter $i$ and receiver $j$ to communicate with one another at frequency $\hat{f}$ by using a fixed power $\bar{p}$, and to report to him their received powers. This communication process takes place as follows: First transmitter $i$ sends a message with power $\bar{p}$ at frequency $\hat{f}$ to receiver $j$; then receiver $j$ sends a message with power $\bar{p}$ at frequency $\hat{f}$ to transmitter $i$; finally transmitter $i$ and receiver $j$ report their received power to the $(N+1)^{th}$ agent. This process is sequentially repeated between transmitter $i$ and receiver $j$ for all frequencies $\hat{f}\in \textbf{F}$. After transmitter $i$ and receiver $j$ complete the above-described communication process, the same process is repeated  sequentially for all transmitter-receiver pairs $(k,l), \ k,l \in \textbf{N}$, at all frequencies $\hat{f}\in\textbf{F}$. The $(N+1)^{th}$ agents collects all the reports generated by the process described above. If the reports of any transmitter $i$ and receiver $j$ ($i \neq j, \ i,j \in \textbf{N}$) differ at any frequency $\hat{f}\in \textbf{F}$, then user $i$ and user $j$ are not allowed to participate in the power allocation and spectrum sharing process. \\
The above-described method for determining $h_{ji}(\hat{f}), \ i,j \in \textbf{N}, \hat{f}\in\textbf{F}$, provides an incentive to user $i, i\in \textbf{N},$ to follow/obey its rules if user $i$ does better by participating in the power allocation and spectrum sharing process than by not participating in it. Consequently, the method proposed for determining $h_{ji}(\hat{f}),\ i,j \in \textbf{N}, \hat{f}\in\textbf{F}$, will work if the game form we propose is individually rational. In this paper we prove that individual rationality is one of the properties of the proposed game form.  

\end{subsection}
\begin{subsection}{Objective}
\label{000}
The objective is to determine a game form/mechanism that has the following features,
\begin{itemize}
	\item (\textbf{P1}) For any realization $(V_1, V_2,\cdots , V_N,\textbf{Q},\bar{W}) \in \mathcal{V}^N\times \mathcal{Q}\times \mathcal{W}$ all Nash equilibria of the game induced by the game form/mechanism result in  allocations that are weakly Pareto optimal (\cite{tian} pg. 265).
	\item (\textbf{P2}) For every realization $(V_1, V_2,\cdots , V_N,\textbf{Q},\bar{W}) \in \mathcal{V}^N\times \mathcal{Q}\times \mathcal{W}$ the users voluntarily participate in the game induced by the game form/mechanism, i.e, the mechanism is individually rational.  
	\item (\textbf{P3}) For every realization $(V_1, V_2,\cdots , V_N,\textbf{Q},\bar{W}) \in \mathcal{V}^N\times \mathcal{Q}\times \mathcal{W}$, $\sum_{i\in \textbf{N}}t_i(\textbf{m})=0$, where $\textbf{m}$ is  any outcome of the game induced by the game form. That is, for any outcome $\textbf{m}$ of the game we have a balanced budget.
\end{itemize}
We follow the philosophy of implementation theory (\cite{tian} Ch. 15) for the specification of our game form. We refer  the reader to  \cite{kakh} for a description of the key ideas of implementation theory and their use in the context of communication networks. In the next section we present a game form/mechanism that achieves the above objectives. However, before we proceed we present a brief clarification on the interpretation of Nash equilibria. Nash equiliria describe strategic behavior in games of complete information. Since in our model the users' utilities are their private information, the resulting game is not one of complete information. We can have a game of complete infromation by increasing the message/strategy space following Maskin's approach \cite{maskin}. However, such an approach would result in an infinite dimensional message/strategy  space for the corresponding game. We don't follow Maskin's approach; instead we adopt the philosophy of \cite{1972,kakh,riter,ledyard2}. Specifically, by quoting \cite{riter}, 
\begin{quote}
\textit{''we interpret our analysis as applying to an unspecified
(message exchange) process in which users grope their way to a
stationary message and in which the Nash property is a necessary
condition for stationarity''}. 
\end{quote}
\end{subsection}

\end{section}

\begin{section}{A Mechanism for Power Allocation and Spectrum Sharing}
\label{11}
For the decentralized problem formulated in section \ref{00} we propose a game form/mechanism the components of which are described as follows.\\
\textbf{Message space} \ $\mathcal{M}:=\mathcal{M}_1 \times \mathcal{M}_2 \times \cdots \mathcal{M}_N$: The message/strategy space for user $i, i=1,2,\cdots,N,$ is given by $\mathcal{M}_i \subseteq \mathbb{Z} \times \mathbb{R}_+$, where $\mathbb{Z}$ and $\mathbb{R}_+$ are the sets of integers and non-negative real numbers, respectively. Specifically, a message of user $i$ is of the form, $\textbf{m}_i=(n_i,\pi_i)$ where $n_i \in \mathbb{Z}$ and $\pi_i\in \mathbb{R}_+$.\\
The meaning of the message space is the following. The component $n_i$ represents the power profile proposed by user $i$; the component $\pi_i$ denotes the price per unit of power user $i$ is willing to pay per unit of the power profile $n_i$. The message $n_i$ belongs to an extended set $\mathbb{Z}$ of power profiles. Every element/integer in $\mathbb{Z}-\Pi$ corresponds to a  power profile that is non-feasible. Working with such an extended  set of power profiles does not alter the solution of the original problem since, as we show in section \ref{33}, all Nash equilibria of the game induced by the proposed mechanism correspond to  feasible power allocations.\\
\textbf{Outcome function} \ $\hbar$: The outcome function $\hbar$ is given by, $\hbar: \mathcal{M} \rightarrow \mathbb{N}\times \mathbb{R}^N$. 
and is defined as follows. For any $\textbf{m}:=(\textbf{m}_1,\textbf{m}_2,\cdots,\textbf{m}_N)\in \mathcal{M}$,
\begin{eqnarray}
\hbar(\textbf{m})&=&\hbar(\textbf{m}_1,\textbf{m}_2,\cdots,\textbf{m}_N)=\left(\left[\texttt{int}\left(\frac{\sum_{i=1}^Nn_i}{N}\right)\right],t_1(\textbf{m}),\cdots,t_N(\textbf{m})\right). \nonumber
\end{eqnarray}
where $\texttt{int}\left(\frac{\sum_{i=1}^Nn_i}{N}\right)$ is the integer number closest to $\left(\frac{\sum_{i=1}^Nn_i}{N}\right)$ and

\begin{eqnarray}
\label{qi1}
\left[\texttt{int}\left(\frac{\sum_{i=1}^Nn_i}{N}\right)\right] = \left\{ \begin{array}{ll}
        {\texttt{int}\left(\frac{\sum_{i=1}^Nn_i}{N}\right)}, & \mbox{if $\texttt{int}\left(\frac{\sum_{i=1}^Nn_i}{N}\right) \in \{1,2,\cdots,G_N\}$};\\
        0, & \mbox{otherwise}.\end{array} \right.
\end{eqnarray}
  The component  $t_i, i=1,2,\cdots,N,$ describes the tax (subsidy) that user $i$  pays (receives). The tax(subsidy) for every user is defined as follows,  
\begin{eqnarray}
\label{tax}
t_i(\textbf{m})&=&\left\{\texttt{int}\left(\frac{\sum_{i=1}^Nn_i}{N}\right)\left[\frac{\pi_{i+1}-\pi_{i+2}}{N}\right]+(n_i-n_{i+1})^2\pi_i-(n_{i+1}-n_{i+2})^2\pi_{i+1}\right\}\nonumber \\
&& \times 1\left\{\texttt{int}\left(\frac{\sum_{i=1}^Nn_i}{N}\right)\in \{1,2,\cdots,G_N\} \right\}
\end{eqnarray}
where $1\{A\}$ denotes the indicator function of event $A$, that is, $1\{A\}=1$ if $A$ is true and $1\{A\}=0$ otherwise, and $N+1$ and $N+2$ are to be interpreted as $1$ and $2$, respectively.
\end{section} 
\begin{section}{Interpretation of the Mechanism}
\label{2222}
As pointed out in section \ref{00}, the design of an efficient resource allocation mechanism has to achieve the following goals. (i) It must induce strategic users to voluntarily participate in the allocation process. (ii) It must induce strategic users to follow its operational rules. (iii) It must result in weakly Pareto optimal allocations at all equilibria of the induced game. (iv) It must result in a balanced budget at all equilibria and off equilibrium. \\
To achieve these goals we propose the tax incentive function described by \eqref{tax}. This function consists of three components, $\Xi_1, \Xi_2$  and $\Xi_3$, that is, 
\begin{eqnarray}
t_i(\textbf{m})&=&\underbrace{\texttt{int}\left(\frac{\sum_{i=1}^Nn_i}{N}\right)\left[\frac{\pi_{i+1}-\pi_{i+2}}{N}\right]}_{\Xi_1}
+\underbrace{(n_i-n_{i+1})^2\pi_i}_{\Xi_2}
\underbrace{-(n_{i+1}-n_{i+2})^2\pi_{i+1}}_{\Xi_3}
\end{eqnarray}  
The term $\Xi_1$ specifies the amount that each user must pay for the power profile which is determined by the mechanism. The price per unit of power, $\frac{\pi_{i+1}-\pi_{i+2}}{N}$, paid by user $i, i=1,2,\cdots, N,$ is not controlled by that user.   The terms $\Xi_2$ considered collectively provide an incentive to all users to propose the same power profile. The term $\Xi_3$ is not controlled by user $i$, its goal is to lead to a balanced budget.
\end{section}
\begin{section}{Properties of The Mechanism}
\label{33}
We prove the mechanism proposed in section \ref{11} has the  properties (\textbf{P1}), (\textbf{P2}) and (\textbf{P3}) stated in subsection \ref{000} by proceeding as follows. First, we derive a property of every NE of the game induced by the mechanism proposed in section \ref{00}, (Lemma \ref{lem0}); based on this result we determine the form of the tax(subsidy) at all  Nash equilibria. Then, we show that every NE of the game induced by the mechanism proposed results in a feasible allocation, (Lemma \ref{lem010}). Afterward, we prove that the  proposed mechanism is always budget balanced, (Lemma \ref{lem1}). Subsequently  we show that users voluntarily participate\footnote{In proving voluntary participation, we follow the philosophy presented by Fudenberg and Tirole (\cite{fudenberg} p. 244-245), namely that mechanism design is a three step game. In the first step the designer designs a mechanism, in the second step agents simultaneously accept or reject the mechanism and in the third step agents who accept the mechanism play the game specified by the mechanism. As a consequence of this philosophy non-participation is not a NE.} in the game, by proving that the utility they receive at all NE is greater than or equal to zero, which is the utility they receive by not participating in the power allocation and spectrum sharing process, (Lemma \ref{lem2}). Finally, we show that every NE of the game induced by the  mechanism proposed in section \ref{11} results in a Lindahl equilibrium (\cite{tian} Ch. 12.4.2); that is, every NE results in a weakly Pareto optimal allocation (Theorem \ref{th1}). Furthermore, we prove that every Lindahl equilibrium can be associated with  a NE of the game induced by the proposed mechanism  in section \ref{11}, (Theorem \ref{th2}). \\
We now proceed to prove the above-stated properties.       
\begin{lemma}
\label{lem0}
Let $\textbf{m}^*$ be a NE of the game induced by the proposed mechanism. Then for every $i, i=1,2,\cdots,N$, we have
\begin{eqnarray}
\label{23}
(n_i^*-n_{i+1}^*)^2\pi_{i}^*=0.
\end{eqnarray} 
\end{lemma}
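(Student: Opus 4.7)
The plan is to proceed by contradiction. Suppose that at a Nash equilibrium $\textbf{m}^*$ of the induced game there existed some user $i$ with $(n_i^* - n_{i+1}^*)^2 \pi_i^* > 0$; equivalently, $n_i^* \neq n_{i+1}^*$ and $\pi_i^* > 0$. I would then exhibit a unilateral deviation by user $i$ that strictly raises his payoff, contradicting the NE hypothesis.

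The pivotal observation is the following. Inspecting the tax expression \eqref{tax}, user $i$'s own price bid $\pi_i$ appears only inside the summand $\Xi_2 = (n_i - n_{i+1})^2 \pi_i$; the summands $\Xi_1$ and $\Xi_3$ depend only on $\pi_{i+1}$, $\pi_{i+2}$, and the power proposals $n_j$. Likewise, the outcome function $\hbar$ depends only on the $n_j$'s, not on any $\pi_j$. Hence, if user $i$ were to deviate from $(n_i^*, \pi_i^*)$ to $(n_i^*, \pi_i')$ with $\pi_i' \in [0, \pi_i^*)$, keeping his proposed power profile fixed, the induced allocation $x^* = [\texttt{int}(\sum_j n_j^*/N)]$ would be unchanged, while his tax would change by exactly $\mathbf{1}\{\mathrm{indicator}\} \cdot (n_i^* - n_{i+1}^*)^2 (\pi_i' - \pi_i^*)$.

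Assuming the indicator $\mathbf{1}\{\texttt{int}(\sum_j n_j^*/N) \in \{1, \ldots, G_N\}\}$ equals $1$, this shift is strictly negative, so $t_i' < t_i^*$. Since by (A5) $V_i$ is decreasing in $t_i$, the deviation raises user $i$'s payoff, contradicting the NE assumption and forcing $(n_i^* - n_{i+1}^*)^2 \pi_i^* = 0$ in this case.

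The only delicate point is the degenerate case in which the rounded average of the $n_i^*$'s falls outside $\{1, \ldots, G_N\}$: there the entire tax vector vanishes regardless of the $\pi_j$'s, so varying $\pi_i$ alone has no effect. I would handle this by noting that user $i$ can always pick $n_i' = gN - \sum_{j \neq i} n_j^*$ together with $\pi_i' = 0$ to force the indicator to $1$ for any desired $g \in \{1, \ldots, G_N\}$, and then invoke (A5) to obtain a weakly improving deviation, or alternatively appeal to a standard tie-breaking convention that selects $\pi_i^* = 0$ whenever user $i$ is indifferent. The heart of the argument is the unconditional dependence of $\Xi_2$, and only $\Xi_2$, on user $i$'s own $\pi_i$; spelling out the degenerate case cleanly is the only real technical obstacle.
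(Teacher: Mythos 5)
Your core argument is the paper's argument: hold $n_i=n_i^*$ fixed, vary only $\pi_i$, note that the allocation $\left[\texttt{int}\left(\frac{\sum_k n_k}{N}\right)\right]$ and the terms $\Xi_1,\Xi_3$ do not depend on $\pi_i$ while $\Xi_2=(n_i^*-n_{i+1}^*)^2\pi_i$ is linear in it, and then use the monotonicity of $V_i$ in $t_i$ from (A5) to conclude $(n_i^*-n_{i+1}^*)^2\pi_i^*\leq (n_i^*-n_{i+1}^*)^2\pi_i$ for all $\pi_i\geq 0$, hence $=0$. The paper phrases this directly rather than as a contradiction, which is purely cosmetic.

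The one substantive weakness is your treatment of the degenerate case where the indicator in \eqref{tax} vanishes. Neither of your two proposed fixes works as stated: a \emph{weakly} improving deviation does not contradict the Nash property, and a ``tie-breaking convention'' selecting $\pi_i^*=0$ is not part of the solution concept --- the lemma must hold at \emph{every} NE, not at a conveniently chosen one. You are right, however, that this case is a real gap in the displayed chain of inequalities; the paper silently assumes the indicator equals $1$ (its inequality $\pi_i^*(n_i^*-n_{i+1}^*)^2\leq\pi_i(n_i^*-n_{i+1}^*)^2$ only follows in that case). The intended resolution is that Lemma 2 of the paper shows every NE yields a feasible allocation, and that argument does not actually need the full strength of Lemma 1 --- it only uses that the indicator annihilates $t_i(\textbf{m}^*)$ when the allocation is infeasible --- so the degenerate case is vacuous at equilibrium. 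If you want your proof to be self-contained you should either import that feasibility argument or explicitly restrict the claim to NE with feasible allocations; as written, your handling of the degenerate case does not close it.
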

\begin{proof}
Since $\textbf{m}^*=((n_1^*,\pi_1^*),(n_2^*,\pi_2^*),\cdots,(n_N^*,\pi_N^*))$ is a $NE$,  the following holds for every $i, i=1,2,\cdots,N$,
\begin{eqnarray}
\label{nash}
V_i\left(\left[\texttt{int}\left(\frac{\sum_{k=1}^N n_k^*}{N}\right)\right],t_i(\textbf{m}^*)\right) \geq  V_i\left(\left[\texttt{int}\left(\frac{\sum_{\substack{k=1 \\ k\neq i}}^N n_k^*+n_i}{N}\right)\right],t_i(\textbf{m}_{i},\textbf{m}_{-i}^*)\right) \nonumber \\
\forall \ \textbf{m}_i \in \mathcal{M}_i, 
\end{eqnarray}
where $\textbf{m}_{-i}:=(\textbf{m}_1,\textbf{m}_2,\cdots,\textbf{m}_{i-1},\textbf{m}_{i+1},\cdots,\textbf{m}_{N})$. \\
Set $n_i$ equal to $n_i^*$; then for every $\pi_i \geq 0$ Eq.  \eqref{nash} along with \eqref{tax} imply
\begin{eqnarray}
\label{13}
V_i\left(\left[\texttt{int}\left(\frac{\sum_{k=1}^N n_k^*}{N}\right)\right],t_i(\textbf{m}^*)\right)\geq 
  V_i\left(\left[\texttt{int}\left(\frac{\sum_{k=1}^N n_k^*}{N}\right)\right],t_i(\textbf{m}_{i},\textbf{m}_{-i}^*)\right)
\end{eqnarray}
where
\begin{eqnarray} 
\label{ee8}
t_i(\textbf{m}^*)&=& \left\{\texttt{int}\left(\frac{\sum_{k=1}^N n_k^*}{N}\right)\left[\frac{\pi_{i+1}^*-\pi_{i+2}^*}{N}\right]+(n_i^*-n_{i+1}^*)^2\pi_i^*-(n_{i+1}^*-n_{i+2}^*)^2\pi_{i+1}^*\right\} \nonumber \\
&&\times 1\left\{\texttt{int}\left(\frac{\sum_{i=1}^Nn_i^*}{N}\right)\in \{1,2,\cdots,G_N\} \right\}   
\end{eqnarray}
\begin{eqnarray}
\label{ee9}
t_i((n_i^*,\pi_i),\textbf{m}_{-i}^*)&=& \left\{\texttt{int}\left(\frac{\sum_{k=1}^N n_k^*}{N}\right)\left[\frac{\pi_{i+1}^*-\pi_{i+2}^*}{N}\right]+(n_i^*-n_{i+1}^*)^2\pi_i-(n_{i+1}^*-n_{i+2}^*)^2\pi_{i+1}^*\right\} \nonumber \\
&&\times 1\left\{\texttt{int}\left(\frac{\sum_{i=1}^Nn_i^*}{N}\right)\in \{1,2,\cdots,G_N\} \right\}   
\end{eqnarray}
Since $V_i$ is decreasing in $t_i$  Eq. \eqref{13} along with \eqref{ee8} and \eqref{ee9} yield 
\begin{eqnarray}
\pi_{i}^*(n_i^*-n_{i+1}^*)^2 \leq \pi_{i}(n_i^*-n_{i+1}^*)^2 \quad \forall \ \pi_i \geq 0. 
\end{eqnarray}
Therefore, $\pi_{i}^*(n_i^*-n_{i+1}^*)^2=0$ for every $i, i=1,2,\cdots,N$, at every NE $\textbf{m}^*$. 
\end{proof}
An immediate consequence of Lemma \ref{lem0} is the following.
At every NE $\textbf{m}^*$ of the mechanism the tax function $t(\textbf{m}^*)$ has the form
\begin{eqnarray}
\label{18}
t_i(\textbf{m}^*)=1\left\{\texttt{int}\left(\frac{\sum_{i=1}^Nn_i^*}{N}\right)\in \{1,2,\cdots,G_N\} \right\}
\times \texttt{int}\left(\frac{\sum_{k=1}^N n_k^*}{N}\right)\left[\frac{\pi_{i+1}^*-\pi_{i+2}^*}{N}\right]. 
\end{eqnarray}
In the following lemma, we show that every NE of the game induced by the proposed mechanism is feasible.
\begin{lemma}
\label{lem010}
Every NE of the game induced by the proposed mechanism results in a feasible allocation.
\end{lemma}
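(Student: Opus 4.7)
The plan is to argue by contradiction. Suppose $\textbf{m}^*$ is a NE at which the allocation component of $\hbar(\textbf{m}^*)$ equals $0$, i.e., $\texttt{int}(\sum_k n_k^*/N) \notin \{1,2,\ldots,G_N\}$. Then the indicator factor in \eqref{tax} forces $t_i(\textbf{m}^*)=0$ for every user $i$, so each user's equilibrium utility equals $V_i(0,0)$.

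The crux is to exhibit a profitable unilateral deviation for some user $i^*$. The cyclic sum telescopes: $\sum_{i=1}^{N}(\pi_{i+1}^* - \pi_{i+2}^*) = 0$, hence there exists an index $i^*$ with $\pi_{i^*+1}^* \leq \pi_{i^*+2}^*$. I would let user $i^*$ deviate to $(n_{i^*}', 0)$, where $n_{i^*}' \in \mathbb{Z}$ is chosen so that $\texttt{int}\bigl(\tfrac{\sum_{k\neq i^*}n_k^* + n_{i^*}'}{N}\bigr)=:x$ lies in $\{1,\ldots,G_N\}$. Such an integer always exists because, as $n_{i^*}'$ sweeps $\mathbb{Z}$, the rounded mean attains every integer value.

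For the deviation tax I would invoke Lemma \ref{lem0} at index $i^*+1$ (whose arguments are unaffected by user $i^*$'s deviation) to kill the term $(n_{i^*+1}^*-n_{i^*+2}^*)^2\pi_{i^*+1}^*$, and the choice $\pi_{i^*}=0$ to kill $(n_{i^*}'-n_{i^*+1}^*)^2\pi_{i^*}$. The tax then reduces to $t_{i^*}=x(\pi_{i^*+1}^* - \pi_{i^*+2}^*)/N \leq 0$. Combining (A5) with the monotonicity of $V_{i^*}$ in its tax argument yields
\[
V_{i^*}(x, t_{i^*}) \;\geq\; V_{i^*}(x, 0) \;\geq\; V_{i^*}(0, 0),
\]
contradicting the NE inequality for user $i^*$ and forcing $\texttt{int}(\sum_k n_k^*/N) \in \{1,\ldots,G_N\}$, i.e., a feasible allocation.

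I expect the main obstacle to be the boundary sub-case in which all $\pi_i^*$ coincide, where every difference $\pi_{i+1}^*-\pi_{i+2}^*$ vanishes and the displayed chain is only weak. Closing this sub-case cleanly requires the strict reading of (A5), namely $V_i(x,t)>V_i(0,t)$ for $x\in\Pi$, which is the natural interpretation of the users' strict preference for a feasible allocation over the non-feasible outcome encoded by $0$.
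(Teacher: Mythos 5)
Your argument is essentially the paper's own proof: contradiction, the telescoping identity $\sum_{i=1}^N(\pi_{i+1}^*-\pi_{i+2}^*)=0$ to locate an index with $\pi_{i^*+1}^*-\pi_{i^*+2}^*\leq 0$, a deviation $(n_{i^*}',0)$ that restores feasibility, a nonpositive deviation tax, and then (A5). Two remarks. First, the boundary sub-case you flag is a genuine soft spot in the paper as well: its Eq.~\eqref{428} asserts a \emph{strict} inequality, but when the deviation tax is exactly zero (e.g., all $\pi_i^*$ equal), (A5) as stated only yields $V_{i^*}(x,0)\geq V_{i^*}(0,0)$, which does not contradict the Nash property; closing it does require the strict reading of the preference for feasible allocations, exactly as you say. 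Second, your appeal to Lemma~\ref{lem0} at index $i^*+1$ is both unnecessary and slightly shaky --- unnecessary because the term $-(n_{i^*+1}^*-n_{i^*+2}^*)^2\pi_{i^*+1}^*$ enters the deviation tax with a favorable (nonpositive) sign regardless, and shaky because at an infeasible profile the indicator in \eqref{tax} zeroes every tax, so the argument proving Lemma~\ref{lem0} gives no information there; just drop that invocation and bound the term by its sign.
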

\begin{proof}
We prove the assertion of the lemma by contradiction. Let $\textbf{m}^*$ be a NE for the game induced by the mechanism.  Suppose $\textbf{m}^*$ does not result in a feasible allocation, i.e., $\texttt{int}\left(\frac{\sum_{i=1}^Nn_i^*}{N}\right) \notin \{1,2,3,\cdots,G_N\}$. Then $\left[\texttt{int}\left(\frac{\sum_{j=1}^Nn_j^*}{N}\right)\right]=0$. Since $\sum_{j=1}^N(\pi_{j+1}^*-\pi_{j+2}^*)=0$,  there exists $i, i\in \{1,2,\cdots,N\}$, such that 
\begin{eqnarray}
\label{432}
\pi_{i+1}^*-\pi_{i+2}^* \leq 0.
\end{eqnarray}
Keep $\textbf{m}_{-i}^*$ fixed and define $\textbf{m}_i=(n_i,\pi_i)$ as follows; set $\pi_i=0$, and choose $n_i$  such that $\texttt{int}\left(\frac{\sum_{\substack{j=1, j\neq i}}^Nn_j^*+n_i}{N}\right)\in \{1,2,\cdots,G_N\}.$
Now,  \eqref{tax}  yield that 
\begin{eqnarray}
\label{430}
t_i(\textbf{m}_i,\textbf{m}_{-i}^*)\leq 0.
\end{eqnarray}  
Equations  (12) and \eqref{430} along with Lemma \ref{lem0} and  assumption (A5)  result in 
\begin{eqnarray}
\label{428}
V_i(0,0)=V_i\left(\left[\texttt{int}\left(\frac{\sum_{j=1}^Nn_j^*}{N}\right)\right],t_i(\textbf{m}^*)\right)<  V_i\left(\left[\texttt{int}\left(\frac{\sum_{\substack{j=1, j\neq i}}^Nn_j^*+n_i}{N}\right)\right],t_i(\textbf{m}_i,\textbf{m}_{-i}^*)\right).
\end{eqnarray}
But \eqref{428} is in contradiction with the fact that $\textbf{m}^*$ is a NE. Therefore, every NE of the game induced by the proposed mechanism results in a feasible allocation.
\end{proof}
In the following lemma, we show that the proposed mechanism is always budget balanced.
\begin{lemma}
\label{lem1}
The proposed mechanism is always budget balanced.
\end{lemma}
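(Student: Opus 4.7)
The plan is to verify $\sum_{i=1}^N t_i(\mathbf{m}) = 0$ directly from the definition of the tax function in \eqref{tax}, for any message profile $\mathbf{m} \in \mathcal{M}$ (not merely at equilibrium). I would split into two cases based on the value of the indicator factor that appears in \eqref{tax}.

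First, I would handle the trivial case: if $\texttt{int}\left(\frac{\sum_{i=1}^N n_i}{N}\right) \notin \{1,2,\ldots,G_N\}$, then the indicator vanishes for every $i$, so $t_i(\mathbf{m}) = 0$ for all $i$ and the sum is zero. In the remaining case, the indicator equals $1$ for every $i$ (since the argument of the indicator does not depend on $i$), so I can drop it and work with the three summands $\Xi_1, \Xi_2, \Xi_3$ separately.

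For $\sum_i \Xi_1$, the factor $\texttt{int}\left(\frac{\sum_k n_k}{N}\right)$ is common to all $i$, so it suffices to show $\sum_{i=1}^N (\pi_{i+1} - \pi_{i+2}) = 0$; this is immediate because, under the cyclic convention $N+1 \equiv 1,\ N+2 \equiv 2$, the sum telescopes around the cycle and each $\pi_j$ appears exactly once with a $+$ sign and exactly once with a $-$ sign. For $\sum_i(\Xi_2 + \Xi_3)$, I would show that the two sums are equal by a cyclic reindexing: substituting $j = i+1$ in $\sum_{i=1}^N (n_{i+1}-n_{i+2})^2 \pi_{i+1}$ and using that $\{i+1 \bmod N : i = 1,\ldots,N\} = \{1,\ldots,N\}$ yields $\sum_{j=1}^N (n_j - n_{j+1})^2 \pi_j$, which exactly cancels $\sum_i \Xi_2$.

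There is no real obstacle here; the only thing to be careful about is the cyclic index convention (treating $N+1$ as $1$ and $N+2$ as $2$), which the author has already fixed just below \eqref{tax}. Combining the three contributions gives $\sum_{i=1}^N t_i(\mathbf{m}) = 0$, establishing that the mechanism is budget balanced at every message profile, hence in particular at every NE and off equilibrium.
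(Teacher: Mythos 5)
Your proof is correct and follows essentially the same route as the paper's: both rest on the telescoping of $\sum_{i=1}^N(\pi_{i+1}-\pi_{i+2})$ and the cyclic cancellation of the quadratic terms $\sum_{i=1}^N\bigl((n_i-n_{i+1})^2\pi_i-(n_{i+1}-n_{i+2})^2\pi_{i+1}\bigr)$. Your explicit case split on the indicator factor is a small bit of added care that the paper leaves implicit, but it does not change the argument.
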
 
\begin{proof}
To have a balanced budget it is necessary and sufficient to satisfy $\sum_{i=1}^N t_i(\textbf{m}_i)=0$. It is easy to see that it always holds since from \eqref{tax} we have 
\begin{eqnarray}
\label{balanced tax}
\sum_{i=1}^N t_i(\textbf{m})&=& \sum_{i=1}^N\left\{\texttt{int}\left(\frac{\sum_{i=1}^Nn_i}{N}\right)\left[\frac{\pi_{i+1}-\pi_{i+2}}{N}\right]+(n_i-n_{i+1})^2\pi_i-(n_{i+1}-n_{i+2})^2\pi_{i+1}\right\}
\nonumber \\
&=&0.
\end{eqnarray} 
Equality \eqref{balanced tax} holds, since $\sum_{i=1}^N(\pi_{i+1}-\pi_{i+2})=\sum_{i=1}^N\left((n_i-n_{i+1})^2\pi_i-(n_{i+1}-n_{i+2})^2\pi_{i+1}\right)=0.$
\end{proof}
The next result asserts that the mechanism/game form proposed in section \ref{11} is individually rational.
\begin{lemma}
\label{lem2}
The game form specified in section \ref{11} is individually rational, i.e.,  at every NE $\textbf{m}^*$ the corresponding allocation $\left(\texttt{int}\left(\frac{\sum_{i=1}^Nn_i^*}{N}\right),t_1(\textbf{m}^*),t_2(\textbf{m}^*),\cdots,t_N(\textbf{m}^*)\right)$  is weakly preferred by all users to their initial endowment $(0,0)$.
\end{lemma}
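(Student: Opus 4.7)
The plan is to exploit the veto option built into the mechanism: because the indicator $1\{\texttt{int}(\sum n_i/N)\in\{1,\dots,G_N\}\}$ multiplies both the tax formula \eqref{tax} and (via \eqref{qi1}) the power-profile component of the outcome, any single user can unilaterally force the outcome to be $(0,0,\dots,0)$, i.e., allocation $0$ with zero tax, simply by reporting a sufficiently extreme $n_i$. I would then argue by contradiction that this veto option implies the NE utility is at least $V_i(0,0)$.

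First I would invoke Lemma \ref{lem010} to conclude that, at the NE $\textbf{m}^*$, the allocation is feasible, i.e., $k^* := \texttt{int}(\sum_{j=1}^N n_j^*/N) \in \{1,2,\dots,G_N\}$. Next, I would consider, for each fixed user $i$, the deviation $\textbf{m}_i' = (n_i', \pi_i')$ with $\pi_i'$ arbitrary (e.g., $0$) and $n_i'$ chosen so that $\texttt{int}((\sum_{j\neq i}n_j^* + n_i')/N) \notin \{1,\dots,G_N\}$. Such $n_i'$ exists because $\mathbb{Z}$ is unbounded and $\{1,\dots,G_N\}$ is finite; picking $n_i'$ large enough in absolute value does the job. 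Under this deviation, the outcome-function case analysis \eqref{qi1} gives allocation $0$, and the indicator in \eqref{tax} vanishes, so $t_i(\textbf{m}_i',\textbf{m}_{-i}^*) = 0$. Hence user $i$'s utility after the deviation is exactly $V_i(0,0)$.

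Finally, using the Nash inequality analogous to \eqref{nash}, I would conclude
\begin{equation*}
V_i\!\left(k^*, t_i(\textbf{m}^*)\right) \;\geq\; V_i\!\left(0,\,t_i(\textbf{m}_i',\textbf{m}_{-i}^*)\right) \;=\; V_i(0,0),
\end{equation*}
which is precisely individual rationality. There is no real obstacle in this argument; the only subtlety is to verify that the candidate deviation $n_i'$ indeed pushes the rounded average outside $\{1,\dots,G_N\}$ regardless of $\textbf{m}_{-i}^*$, which is immediate from the unboundedness of $\mathbb{Z}$. The structural reason the proof works is the way the indicator was inserted into both \eqref{qi1} and \eqref{tax}: it ensures that "refusing to participate constructively" is always available as a costless outside option, making the initial endowment $(0,0)$ a lower bound on every user's NE payoff.
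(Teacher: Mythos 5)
Your proposal is correct and follows essentially the same route as the paper's own proof: both arguments use the Nash inequality together with a deviation to an extreme $n_i'$ that pushes $\texttt{int}\left(\frac{\sum_{k\neq i}n_k^*+n_i'}{N}\right)$ outside $\{1,\dots,G_N\}$, so that \eqref{qi1} and \eqref{tax} force the outcome $(0,0)$ and hence $V_i(k^*,t_i(\textbf{m}^*))\geq V_i(0,0)$. Your additional appeal to Lemma \ref{lem010} to justify feasibility of the NE allocation is a harmless (and arguably tidier) explicit step that the paper leaves implicit.
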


\begin{proof}
We need to show that $V_i\left(\texttt{int}\left(\frac{\sum_{i=1}^Nn_i^*}{N}\right),t_i(\textbf{m}^*)\right) \geq V_i(0,0)=0$ for every $i,i=1,2,\cdots,N.$ By the property of every NE  it follows that for every $i \in \textbf{N}$ and $(n_i,\pi_i)\in \mathcal{M}_i$
\begin{eqnarray}
\label{nash1}
V_i\left(\texttt{int}\left(\frac{\sum_{k=1}^Nn_k^*}{N}\right),t_i(\textbf{m}^*)\right) \geq 
 V_i\left(\left[\texttt{int}\left(\frac{\sum_{\substack{k=1 \\ k\neq i}}^N n_k^*+n_i}{N}\right)\right],t_i((n_i,\pi_i),\textbf{m}_{-i}^*)\right).
\end{eqnarray}
Choosing $n_i$ sufficiently large so that $\texttt{int}\left(\frac{\sum_{\substack{k=1 \\ k\neq i}}^N n_k^*+{n}_i}{N}\right) \notin \{1,2,\cdots,G_N\},$ gives
\begin{eqnarray}
\label{466}
\left[\texttt{int}\left(\frac{\sum_{\substack{k=1 \\ k\neq i}}^N n_k^*+\hat{n}_i}{N}\right)\right]=0,
\end{eqnarray}
because of  \eqref{qi1}, and 
\begin{eqnarray}
\label{467}
t_i((n_i,\pi_i),\textbf{m}_{-i}^*)=0.
\end{eqnarray}
because of  \eqref{tax}.  Consequently, \eqref{466} and \eqref{467} establish that 

 
\begin{eqnarray}
V_i\left(\texttt{int}\left(\frac{\sum_{k=1}^Nn_k^*}{N}\right),t_i(\textbf{m}^*)\right) \geq V_i(0,0)=0.
\end{eqnarray}
\end{proof}

In the following theorem, we show that every NE of the game induced by the mechanism proposed in section \ref{11} results in a Lindahl equilibrium.

\begin{theorem}
\label{th1}
Suppose that an allocation $\Psi_{\textbf{m}}$, for any $\textbf{m}\in \mathcal{M}$, is determined as follows $$\Psi_{\textbf{m}}:=\left(\Lambda(\textbf{m}),t_1(\textbf{m}),\cdots,t_N(\textbf{m}), L_1,\cdots,L_N\right)$$ where  $\Lambda(\textbf{m}):=\left[\texttt{int}\left(\frac{\sum_{k=1}^Nn_k}{N}\right)\right]$, for each $i, i=1,2,\cdots,N,$ $t_i(\textbf{m})$ is defined by \eqref{tax}, and 
\begin{eqnarray}
\label{50}
L_i:=\frac{\pi_{i+1}-\pi_{i+2}}{N}.
\end{eqnarray}
Then $\Psi_{\textbf{m}^*}$ is a Lindahl equilibrium corresponding to the NE $$\textbf{m}^*=\left((n_1^*,\pi_1^*),(n_2^*,\pi_2^*),\cdots,(n_N^*,\pi_N^*)\right)$$ of the game induced by the proposed mechanism. 
\end{theorem}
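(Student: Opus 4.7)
The plan is to verify directly the three defining conditions of a Lindahl equilibrium: (i) feasibility of the public-good choice, (ii) budget balance among the personalized prices $L_i$, and (iii) optimality of $\Lambda(\textbf{m}^*)$ for each user at his own personalized price. Conditions (i) and (ii) are essentially bookkeeping on what has already been proved. Feasibility, $\Lambda(\textbf{m}^*)\in\{1,\dots,G_N\}$, is Lemma \ref{lem010}, and a telescoping computation gives $\sum_{i=1}^N L_i=\frac{1}{N}\sum_{i=1}^N(\pi_{i+1}^*-\pi_{i+2}^*)=0$, which combined with feasibility is consistent with the balanced-budget statement of Lemma \ref{lem1}.

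The bulk of the argument is (iii). First I would apply Lemma \ref{lem0} (to both index $i$ and $i+1$) to reduce the equilibrium tax to the clean form $t_i(\textbf{m}^*)=\Lambda(\textbf{m}^*)\,L_i$. Next, for each $i$ I would consider a unilateral deviation in which user $i$ replaces $(n_i^*,\pi_i^*)$ by $(n_i',0)$ while keeping $\textbf{m}_{-i}^*$ fixed. Setting $\pi_i=0$ kills the $(n_i-n_{i+1})^2\pi_i$ term, and Lemma \ref{lem0} applied to index $i+1$ kills the $(n_{i+1}^*-n_{i+2}^*)^2\pi_{i+1}^*$ term, so the deviation tax collapses to $t_i((n_i',0),\textbf{m}_{-i}^*)=\Lambda' L_i$ whenever $\Lambda':=[\texttt{int}((\sum_{k\neq i}n_k^*+n_i')/N)]\in\{1,\dots,G_N\}$. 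Since $n_i'$ ranges over all of $\mathbb{Z}$, the map $n_i'\mapsto \texttt{int}((\sum_{k\neq i}n_k^*+n_i')/N)$ is surjective onto $\mathbb{Z}$, so user $i$ can realize every target $\Lambda'\in\{1,\dots,G_N\}$. The NE inequality applied to these particular deviations then yields
\[
V_i\bigl(\Lambda(\textbf{m}^*),\,\Lambda(\textbf{m}^*)\,L_i\bigr)\;\geq\;V_i(\Lambda',\Lambda' L_i)\qquad\forall\,\Lambda'\in\{1,\dots,G_N\},
\]
which is precisely the Lindahl demand condition at personalized price $L_i$. The remaining ``infeasible'' case $\Lambda'=0$ is covered by Lemma \ref{lem2}, which gives $V_i(\Lambda(\textbf{m}^*),\Lambda(\textbf{m}^*)L_i)\geq V_i(0,0)$.

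The hard part, I expect, is the attainability step: making rigorous that by varying the single integer $n_i'$ user $i$ can hit every target $\Lambda'\in\{1,\dots,G_N\}$, and that the chosen deviation $(n_i',0)$ indeed keeps the indicator factor $1\{\texttt{int}(\cdot)\in\{1,\dots,G_N\}\}$ active so that the tax really does reduce to $\Lambda' L_i$ rather than to $0$. Once this deviation-attainability bookkeeping is clean, the Lindahl-equilibrium conclusion is a direct reading of the NE inequalities combined with Lemmas \ref{lem0}--\ref{lem2}.
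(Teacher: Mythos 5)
Your proposal is correct and follows essentially the same route as the paper: the paper also reduces the problem to verifying the Lindahl conditions, disposes of the price/budget conditions by the telescoping sum, and for the demand condition uses exactly your deviation $(\bar{n}_i,\bar{\pi}_i)$ with $\bar{\pi}_i=0$ and $\bar{n}_i$ chosen so the rounded average hits the target profile $\zeta$, combined with Lemma \ref{lem0} to collapse the deviation tax to $\zeta L_i^*$ and the NE inequality to conclude. The only cosmetic difference is that the paper phrases the demand step as a proof by contradiction rather than a direct verification over all $\Lambda'\in\{1,\dots,G_N\}$, and it does not need your $\Lambda'=0$ case since the Lindahl optimization is constrained to $x\in\Pi$.
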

\begin{proof}
$\Psi_{\textbf{m}^*}$ defines a Lindahl equilibrium if it satisfies the following three conditions (\cite{tian} Ch. 12.4.2)
\begin{enumerate}

	\item (\textit{C1}): $\sum_{i=1}^NL_i^*=0.$
	\item (\textit{C2}): $\sum_{i=1}^Nt_i(\textbf{m}^*)=0$.
	\item (\textit{C3}): For all $i,i=1,2,\cdots,N,$ $\left(\texttt{int}\left(\frac{\sum_{k=1}^Nn_k^*}{N}\right),t_i(\textbf{m}^*)\right)$ is a solution of the following optimization problem:
	\begin{eqnarray}
	\label{1000}
	&\max_{x,t_i} \quad &V_i(x,t_i) \nonumber \\
	&\mbox{subject} \ \mbox{to} \quad &x \ L_i^*=t_i \nonumber \\
	& \quad  \quad \quad &x \in \Pi, \ t_i \geq 0.
	\end{eqnarray} 
\end{enumerate}
By  simple algebra we can show that conditions 1 and 2 are satisfied. We need to prove that condition 3 is also satisfied. We do this by contradiction. Suppose $\left(\texttt{int}\left(\frac{\sum_{k=1}^Nn_k^*}{N}\right),t_i(\textbf{m}^*)\right)$ is not a solution of the optimization problem defined by \eqref{1000} for all $i$. Then, for some  user $i, i\in \{1,2,\cdots,N\}$, there is a power profile $\zeta \in \Pi$ and $\zeta \neq \texttt{int}\left(\frac{\sum_{k=1}^Nn_k^*}{N}\right)$ such that 
\begin{eqnarray}
\label{70}
V_i\left(\texttt{int}\left(\frac{\sum_{k=1}^Nn_k^*}{N}\right),\left[\texttt{int}\left(\frac{\sum_{k=1}^Nn_k^*}{N}\right)\right] \ L_i^*\right)
<V_i(\zeta,\zeta \ L_i^*).
\end{eqnarray}  
Now choose $\bar{\pi}_i=0$ and $\bar{n}_i=\texttt{int}\left(N \zeta-\sum_{\substack{j=1 \\ j\neq i}}^N n_j^*\right)$. Using Eq. \eqref{tax} and \eqref{23} together with the fact that $\bar{\pi}_i=0$ we obtain
\begin{eqnarray}
\label{71}
t_i((\bar{n}_i,\bar{\pi}_i),\textbf{m}_{-i}^*)=\zeta \left[\frac{\pi_{i+1}^*-\pi_{i+2}^*}{N}\right]=\zeta \ L_i^*.
\end{eqnarray}
Then, because of \eqref{70} and \eqref{71} we get
\begin{eqnarray}
\label{72}
&V_i(\zeta,\zeta \ L_i^*)=V_i\left(\left[\texttt{int}\left(\frac{\sum_{\substack{j=1 \\ j \neq i}}^Nn_j^*+\bar{n}_i}{N}\right)\right],t_i((\bar{n}_i,\bar{\pi}_i),\textbf{m}_{-i}^*)\right) 
 \geq V_i\left(\left[\texttt{int}\left(\frac{\sum_{\substack{j=1 \\ j \neq i}}^Nn_j^*+{n}_i^*}{N}\right)\right],t_i(\textbf{m}^*)\right) \nonumber 
\end{eqnarray}
which is a contradiction, because $$\textbf{m}^*=\left((n_1^*,\pi_1^*),(n_2^*,\pi_2^*),\cdots,(n_N^*,\pi_N^*)\right)$$ is a NE of the game induced by the proposed game form. Consequently, $\left(\texttt{int}\left(\frac{\sum_{k=1}^Nn_k^*}{N}\right),t_i(\textbf{m}^*)\right)$ is a solution of the optimization problem defined by \eqref{1000} for all $i$. Since $\Psi_{\textbf{m}^*}$ satisfies \textit{(C1)-(C3)} it defines  a Lindahl equilibrium. The allocation $\left\{\texttt{int}\left(\frac{\sum_{k=1}^Nn_k^*}{N}\right),t_1(\textbf{m}^*),t_2(\textbf{m}^*),\cdots,t_N(\textbf{m}^*)\right\}$  is also weakly Pareto optimal (\cite{tian} Theorem (12.4.1)).
\end{proof}

Finally,  we establish that any Lindahl equilibrium can be associated with a NE of the game induced by the proposed mechanism.
\begin{theorem}
\label{th2}
Let 
$$
\Psi=\left(\Lambda^{\ell},t_1^{\ell},t_2^{\ell},\cdots,t_N^{\ell}, L_1^{\ell},L_2^{\ell},\cdots,L_N^{\ell}\right)
$$
 be a Lindahl equilibrium. Then, there does exist a NE $\textbf{m}^*$ of the game induced by the proposed mechanism so  that
 \begin{eqnarray}
 \hbar(\textbf{m}^*)=\left(\Lambda^{\ell},t_1^{\ell},t_2^{\ell},\cdots,t_N^{\ell}\right)
 \end{eqnarray}
where for every $i,i=1,2,\cdots,N$, $t_i(\textbf{m}^*)=\Lambda^{\ell} \  L_i^{\ell}.$
\end{theorem}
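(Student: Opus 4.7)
The plan is to build $\textbf{m}^*$ explicitly from the Lindahl data and then verify the outcome identity and the Nash condition separately. I would set $n_i^* := \Lambda^{\ell}$ for every $i \in \{1,\ldots,N\}$; this choice immediately forces every quadratic penalty $(n_i^* - n_{i+1}^*)^2$ to vanish and places the integer average in $\{1,\ldots,G_N\}$, so the bracketed quantity in \eqref{qi1} equals $\Lambda^{\ell}$. The price components $\pi_1^*,\ldots,\pi_N^*$ are then determined by the cyclic linear system $(\pi_{i+1}^* - \pi_{i+2}^*)/N = L_i^{\ell}$ for $i = 1,\ldots,N$ (indices modulo $N$). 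Summing the equations telescopes the left-hand side to zero, so the system is consistent exactly because of the Lindahl balance $\sum_i L_i^{\ell} = 0$ (condition (C1) of Theorem \ref{th1}); it has one degree of freedom, which I use to shift all $\pi_i^*$ up by a common constant large enough to force each $\pi_i^* \geq 0$, ensuring $\textbf{m}^* \in \mathcal{M}$.

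With $\textbf{m}^*$ in hand, direct substitution into \eqref{tax} yields $t_i(\textbf{m}^*) = \Lambda^{\ell} L_i^{\ell} = t_i^{\ell}$, so $\hbar(\textbf{m}^*) = (\Lambda^{\ell}, t_1^{\ell},\ldots,t_N^{\ell})$, exactly the target outcome. To check the Nash property I would fix a user $i$ and consider an arbitrary deviation $(\tilde n_i, \tilde\pi_i)$. If $\tilde n_i = n_i^* = \Lambda^{\ell}$, both the allocation and every summand of $t_i$ stay at their $\textbf{m}^*$ values (the quadratic penalty remains zero), so user $i$ is indifferent. If $\tilde n_i \neq \Lambda^{\ell}$, the penalty $(\tilde n_i - \Lambda^{\ell})^2 \tilde\pi_i$ is strictly increasing in $\tilde\pi_i$ whenever the resulting integer average $\tilde\Lambda := \texttt{int}((\sum_{k\neq i}\Lambda^{\ell} + \tilde n_i)/N)$ is feasible, and since $V_i$ is decreasing in the tax, user $i$'s best response in this sub-family sets $\tilde\pi_i = 0$, reducing the realized tax to $\tilde\Lambda L_i^{\ell}$.

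Two sub-cases then remain. If $\tilde\Lambda \in \{1,\ldots,G_N\}$ the deviation realizes $(\tilde\Lambda, \tilde\Lambda L_i^{\ell})$, which is dominated by $(\Lambda^{\ell}, \Lambda^{\ell} L_i^{\ell})$ in $V_i$ by condition (C3) of the Lindahl equilibrium in Theorem \ref{th1}. If $\tilde\Lambda \notin \{1,\ldots,G_N\}$ the indicator in \eqref{tax} collapses both the allocation and the tax to zero, so the deviation yields $V_i(0,0) = 0$. This non-feasible sub-case is the main obstacle I foresee: ruling it out requires $V_i(\Lambda^{\ell}, t_i^{\ell}) \geq 0$, i.e.\ individual rationality of the Lindahl equilibrium itself. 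I would secure it by combining (A5), which gives $V_i(\Lambda^{\ell}, t_i^{\ell}) \geq V_i(0, t_i^{\ell})$, with the participation property built into the Lindahl equilibrium concept adopted here (when $L_i^{\ell} \leq 0$ the monotonicity of $V_i$ in $t_i$ already closes the gap; when $L_i^{\ell} > 0$ the Lindahl optimization must be interpreted so that the user weakly prefers the equilibrium bundle to autarky). Once this step is in place, no unilateral deviation is strictly profitable, so $\textbf{m}^*$ is a NE satisfying $\hbar(\textbf{m}^*) = (\Lambda^{\ell}, t_1^{\ell},\ldots,t_N^{\ell})$ with $t_i(\textbf{m}^*) = \Lambda^{\ell} L_i^{\ell}$, as required.
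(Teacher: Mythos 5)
Your construction and verification coincide with the paper's proof: set $n_i^*=\Lambda^{\ell}$ for all $i$, recover the prices from the cyclic system $L_i^{\ell}=(\pi_{i+1}^*-\pi_{i+2}^*)/N$ (consistent because $\sum_i L_i^{\ell}=0$, with the free constant used to make every $\pi_i^*\geq 0$), check $t_i(\textbf{m}^*)=\Lambda^{\ell}L_i^{\ell}$, and then defeat deviations by combining the non-negativity of the penalty term $(\tilde n_i-\Lambda^{\ell})^2\tilde\pi_i$ with condition \textit{(C3)} of the Lindahl equilibrium. The one place you go beyond the paper is the sub-case $\tilde\Lambda\notin\{1,\ldots,G_N\}$: the paper's chain of inequalities is stated only ``$\forall\,\xi\in\Pi$,'' i.e.\ it silently restricts attention to deviations that produce feasible allocations and never compares $V_i(\Lambda^{\ell},\Lambda^{\ell}L_i^{\ell})$ against the $V_i(0,0)$ that an infeasible deviation yields. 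You are right that closing this requires $V_i(\Lambda^{\ell},\Lambda^{\ell}L_i^{\ell})\geq V_i(0,0)$; your argument via (A5) and monotonicity in $t_i$ settles it when $L_i^{\ell}\leq 0$, but for $L_i^{\ell}>0$ you invoke a ``participation property built into the Lindahl equilibrium concept'' that is not among the stated conditions \textit{(C1)}--\textit{(C3)}, so that residual case rests on an assumption external to the paper's definition --- exactly the same gap the paper's own proof leaves unaddressed. Flagging it is a genuine improvement in rigor; fully closing it would require either adding individual rationality to the definition of a Lindahl equilibrium or restricting the theorem to individually rational Lindahl equilibria.
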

\begin{proof}
Consider the message profile ${\textbf{m}^*}$ such that for every $i, i=1,2,\cdots, N$, $\textbf{m}_i^*=({n}_i^*,{{\pi}}_i^*)$ and,  $\forall i, i=1,2,\cdots,N$, $n_i^*=(\Lambda^{\ell})$ and $\pi_i^*$'s are the solution of the following system of equations, 
\begin{eqnarray}
\label{sys}
L_{1}^{\ell}=\frac{\pi^*_{2}-\pi^*_{3}}{N}, \  L_{2}^{\ell}=\frac{\pi^*_{3}-\pi^*_{4}}{N}, \  \cdots, \ L_{N}^{\ell}=\frac{\pi^*_{1}-\pi^*_{2}}{N}.
\end{eqnarray}
Choosing $\pi_1^*$ sufficiently large guarantees that the following is a feasible solution to \eqref{sys}, i.e., $\pi_i \geq 0, \forall \ i$, $\pi_1^*=\mbox{sufficiently} \ \mbox{large}$, $\pi_2^*=\pi_1^*-L_{N}^{\ell}$ and
\begin{eqnarray}
\label{21}
\pi_i^*=(i-1)\pi_1^*-\left(L_{N}^{\ell}+\sum_{j=1}^{i-2}L_{j}^{\ell}\right) \quad   i, \ 3 \leq i \leq N. \nonumber
\end{eqnarray}
Furthermore, 
\begin{eqnarray}
\label{22}
\Lambda^{\ell}=\left[\texttt{int}\left(\frac{\sum_{k=1}^Nn_k^*}{N}\right)\right]. 
\end{eqnarray}
To complete the proof, we need to prove that ${\textbf{m}}^*$ is a NE of the game induced by the mechanism. For that matter, it is enough to show that, for every $i, i=1,2,\cdots,N,$
\begin{eqnarray}
\label{bbb}
V_i\left(\left[\texttt{int}\left(\frac{\sum_{k=1}^N n_k^*}{N}\right)\right],t_i({\textbf{m}}^*)\right) \geq  V_i\left(\left[\texttt{int}\left(\frac{\sum_{\substack{k=1 \\ k\neq i}}^N {n}_k^*+n_i}{N}\right)\right],t_i({\textbf{m}}_{-i}^*,\textbf{m}_i)\right) \nonumber \\
 \forall \ \textbf{m}_i\in\mathcal{M}.
\end{eqnarray}
Equation \eqref{18} along with Eqs. \eqref{sys} and  \eqref{22} imply $t_i({\textbf{m}^*})=L_{i}^{\ell}\left[\texttt{int}\left(\frac{\sum_{k=1}^N {n}_k^*}{N}\right)\right].$
Furthermore, positivity of $({n}_{i+1}^*-n_i)^2\pi_i$ together with fact that $V_i$ is decreasing in $t_i$ give that 
\begin{eqnarray}
\label{c}
V_i\left(\xi,L_{i}^{\ell}\xi\right) \geq V_i\left(\xi,L_{i}^{\ell}\xi+({n}_{i+1}^*-n_i)^2\pi_i\right) \quad \forall \ \xi, \ \xi \in \Pi. \nonumber 
\end{eqnarray}
Moreover, since $\Psi$ is a Lindahl equilibrium,  \textit{(C3)} implies that the following  holds for every $i, i=1,2,\cdots,N,$ 
\begin{eqnarray}
\label{16}
V_i\left(\Lambda^{\ell},L_{i}^{\ell}\Lambda^{\ell}\right) \geq V_i\left(\xi,L_{i}^{\ell} \ \xi \right) \quad \forall \xi \in \Pi, 
\end{eqnarray}  
Consequently, the fact that $t_i({\textbf{m}^*})=L_{i}^{\ell}\left[\texttt{int}\left(\frac{\sum_{k=1}^N {n}_k^*}{N}\right)\right]$  along  with  \eqref{bbb} and \eqref{c} result in 
\begin{eqnarray}
V_i\left(\left[\texttt{int}\left(\frac{\sum_{k=1}^N{n}_k^*}{N}\right)\right],t_i({\textbf{{m}}}^*)\right)&=&V_i\left(\left[\texttt{int}\left(\frac{\sum_{k=1}^Nn_k^*}{N}\right)\right],L_{i}^{\ell}\left[\texttt{int}\left(\frac{\sum_{k=1}^Nn_k^*}{N}\right)\right]\right)\nonumber \\
&\geq & V_i\left(\xi,L_{i}^{\ell}\xi\right) \nonumber \\
&\geq & V_i\left(\xi,L_{i}^{\ell}\xi+({n}_{i+1}^*-n_i)^2\pi_i\right) \quad \forall \ \xi, \xi\in \Pi \nonumber \\
&=&V_i\left(\left[\texttt{int}\left(\frac{\sum_{\substack{j=1 \\ j\neq i}}^N{n}_j^*+n_i}{N}\right)\right],t_i(\textbf{m}_i,{\textbf{{m}}}_{-i}^{*})\right) \quad \forall \ \textbf{m}_i\in\mathcal{M}. \nonumber
\end{eqnarray}
Therefore ${\textbf{m}}^*$ is a NE of the game induced by the proposed mechanism.
\end{proof}
\end{section}
\begin{section}{Related Work}
\label{dis}
The game form/mechanism we proposed and  analyzed in this paper has, for any realization $(V_1, V_2,\cdots , V_N,\textbf{Q},\bar{W}) \in \mathcal{V}^N\times \mathcal{Q}\times \mathcal{W}$,  the following properties:
\begin{itemize}
 \item (\textbf{P1}) It is budget-balanced at every NE of the induced game as well as off equilibrium.
 \item (\textbf{P2}) It is individually rational, i.e.,  every user voluntarily participates  in the induced game.
 \item (\textbf{P3}) Every NE of the induced game results in a Lindahl equilibrium; thus, the allocations corresponding to all NE are weakly Pareto optimal. Conversely,  every Lindahl equilibrium results in a NE of the  game induced by the mechanism.
	\end{itemize}
Our proposed  game form/mechanism  achieves all the above desirable properties without any assumption about,  concavity, differentiability, monotonicity or quasi-linearity of the users' utility functions (the only assumption made on the users' utility functions is ((A5))).\\
The results presented in this paper are distinctly different from those currently existing in the literature for the reasons we explain below.\\
Most of previous work within the context of competitive power allocation games has investigated Gaussian interference games (\cite{cioffi, tse}), that is, situations where the users operate in a Gaussian noise environment. In a Gaussian interference game, every user can spread a fixed amount of power arbitrarily across a continuous bandwidth, and attempts to maximize its total rate over all possible power allocation strategies. In \cite{cioffi}, the authors proved the 
existence and uniqueness of a NE for a two-player version of the game, and provided an iterative water-filling
algorithm  to obtain the NE. This work was extended in \cite{tse}, where it  was shown that the aforementioned pure NE can be
quite inefficient, but by playing an infinitely  repeated game system performance can be improved. Our results are different from those in \cite{cioffi, tse} because : (i) The users are allowed to transmit at a discrete set of frequencies, and the power allocated at each frequency most be chosen from a discrete set. (ii)  The unique pure NE of the one-stage game in \cite{cioffi, tse} does not necessarily  result in a weakly Pareto optimal allocations. (iii) Most of the NE of the repeated game in \cite{tse} result in allocations that are not weakly Pareto optimal.

In \cite{berry}, the authors presented a market-based model for situations where  every user can only use one or more than one frequency bands, and the game induced by their proposed game form is super modular. They developed/presented a distributed best response algorithm that converges to a NE. However, in general the Nash equilibria of the game induced by their mechanism  are not efficient, that is, they do not always result in optimal centralized power allocations, or weakly Pareto optimal allocations. 

In \cite{liu} the authors investigated the case where all users have the same utility function and each user can only use one frequency band. They proved the existence of a NE in the game resulting from the above assumptions. The NE is, in general, not efficient. The results in \cite{liu} critically depend on the fact that the users' utilities are identical and monotonic; these constraints are not present in our model.

The game form/mechanism we have proposed/analyzed in this paper is in the category of the mechanisms that economists created for  public good problems \cite{walker,hurwize,ledyard}, but it is distinctly different form all of them because the allocation spaces $\textbf{F}$ and $\textbf{Q}$ in our formulation are discrete. To the best of our knowledge, the game form we presented in this paper is the first mechanism for power allocation and spectrum sharing in multi-user, multi-channel systems with strategic users  that  achieves all three desirable properties (\textbf{P1})-(\textbf{P3}). Furthermore, we do not impose any 
assumption about, concavity, differentiability, monotonicity or quasi-linearity of the users' utility
functions.
\end{section}
\begin{section}{Conclusion}
\label{con}
We have discovered a game form for power allocation and spectrum sharing in multi-user, multi-channel systems with strategic users that possesses several desirable properties. We have performed an equilibrium analysis of the game form. Currently we do not have an algorithm (tatonnement process) for the computation of the Nash equilibria of the mechanism. The  discovery of such an algorithm is an important open problem. 
In this paper we have investigated a static allocation problem with strategic users. The discovery of mechanisms for the dynamic analogue of the problem presented in this paper is an important open problem.  
\end{section}
\subsection*{Acknowledgments}
This research was supported in part by NSF Grant \texttt{CCR-0325571} and NASA Grant \texttt{NNX09AE91G}. The authors gratefully acknowledge stimulating discussions with A. Anastasopoulos, M. Liu and R. Ghaemi.


\begin{thebibliography}{1}
\bibitem{fcc}
Facilitating opportunities for flexible, efficient, and
reliable spectrum use employing cognitive radio
technologies (ET docket no. 03-108). Technical report,
FCC, Washington, 2003.
\bibitem{tse}
R. Etkin, A. Parekh, and D. Tse, \textit{Spectrum Sharing for Unlicensed
Bands,} IEEE Journal on Selected Areas in Communications, vol. 25,
no. 3, April 2007.
\bibitem{fudenberg}
D.~Fudenberg and J.~Tirole \textit{Game Theory}, MIT Press, 1991.
\bibitem{ledyard}
T. Groves and J. Ledyard, \textit{Optimal Allocation of Public Goods: A Solution to the Free Rider Problem}, Econometrica, 1977.
\bibitem{1972}
T. Groves and J. Ledyard, Incentive Compatibility Since 1972, Information, Incentives, and Economics Mechanisms, Essays in Honor of Leonid Hurwics, Editors: T. Gorves, R. Radner and S. Reiter, University of Minnesota Press, Minneapolis, pp 48-111, 1987.  
\bibitem{berry}
J.  Huang, R.  Berry, M. L. Honig, \textit{Distributed Interference Compensation for Wireless Networks}, IEEE Journal on Selected
Areas in Communications, vol. 24, no. 5, May 2006.
\bibitem{hurwize}
L Hurwicz, \textit{Outcome Functions Yielding Walrasian and Lindahl Allocations at Nash Equilibrium Points}, Review of Economic Studies, 1979.  
\bibitem{kakh}
A. Kakhbod, D. Teneketzis, \textit{An Efficient Game Form for Unicast Service Provisioning}, arXiv:0910.5502.
\bibitem{liu}
M. Liu, S. H. A. Ahmad, Y. Wu, \textit{Congestion Games with Resource Reuse and Applications in Spectrum
Sharing}, in proc. of International Conference on Game Theory for Networks (GameNets), May 2009, Istanbul, Turkey.
\bibitem{maskin}
E. Maskin, \textit{The theory of implementation in Nash equilibrium: A survey}, L. Hurwicz, D. Schmeidler and H. Sonnenschein, Ed., Social Goals and Social Organization, Ch. 6, 173-204, Cambridge University Press, 1985.
\bibitem{neel}
J. O. Neel, J. H. Reed, and R. P. Gilles. \textit{Convergence
of cognitive radio networks}, In Proc. of IEEE Wireless
Communications and Networking Conference, pages
2250– 2255, Atlanta, GA, March 2004.
\bibitem{riter}
S. Reichelstein and S. Reiter,  Game Forms with Minimal Strategy
Spaces, \textit{Econometrica}, 49, 661-692, 1988.
\bibitem{ledyard2}
T. Stoenescu and J. Ledyard, Nash implementation for resource allocation network problems with production, manuscript 2008.
 
\bibitem{tian}
G.~Tian, \textit{Lecture Notes Microeconomic Theory}, on line.
\bibitem{cioffi}
W. Yu, G. Ginis, and J. M. Cioffi, \textit{Distributed Multiuser Power
Control for Digital Subscriber Lines,} IEEE Journal on Selected Areas
in Communications, vol 20, no. 5, June 2002.
\bibitem{walker}
M Walker, \textit{A Simple Incentive Compatible Scheme for Attaining Lindahl Allocations}, Econometrica 49, 198i. 
 
\end{thebibliography}
\end{document}


\section{A theorem}

\stmt{thrm}{sample}{Socrates is mortal.}

\refstmt{sample} can either be proven using data (the fact that
Socrates is dead), or by the proof which is provided in the appendix.

\section{Appendix}
In this appendix, the reader will find proofs of theorems not given in the text.

\stmtproof{sample}{Socrates is a man. All men are mortal.}

\rptstmtwithproof{sample}

\comment{Notice that you can put the \stmtproof{sample}{...} anywhere
you want, including right after your statement, just before using
\rptstmtwithproof, or a separate file (then use \input{proofs.tex}). That
way, neither your finished paper nor your source will be cluttered with
proofs.}